\newcounter{ecount}
\DeclareMathOperator{\Sym}{Sym}
\newtheorem{thm}{Theorem}
\newtheorem{lemma}{Lemma}
\newtheorem{prop}[lemma]{Proposition}
\newenvironment{Proof}{\par\noindent\textbf{Proof:}}
{\qed}
 \title{The Ring of Algebraic Functions on Persistence Bar Codes}
\author{Aaron Adcock  \and  Erik Carlsson  \and Gunnar Carlsson   \footnote{Research supported in part by NSF DMS-0406992} }
\begin{document}
\maketitle
\section{Introduction}
Persistent homology (\cite{topdata}, \cite{persistence})  is a fundamental tool in the area of computational topology.  It can be used to infer topological structure in data sets (see \cite{range}, \cite{patches}), but variations on the method can be applied to study aspects of the shape of point clouds which are not overtly topological (\cite{shapes}, \cite{curves}).  The methodology assigns to any finite metric space (such as are typically obtained in experimental data of various kinds) and non-negative integer $k$ a {\em bar code}, by which we will mean a finite collection of intervals with endpoints on the real line.  The integer $k$ specifies a dimension of a feature (zero-dimensional for a cluster, one-dimensional for a loop, etc.), and an interval  represents a feature which is ``born" at the  value of a parameter (the persistence parameter) given by the left hand endpoint of the interval, and which ``dies" at the value given by the right hand endpoint.  These barcodes have been demonstrated to identify structure in spaces of image patches in \cite{range} and \cite{patches}, and have been demonstrated to distinguish between handdrawn letters in \cite{curves}.  Because of the unusual structure of the invariant, i.e. as a collection of intervals rather than numerical quantities, the method currently requires substantial knowledge of topological methods.  It would clearly be useful to assign and interpret various numerical quantities attached to bar codes,  so that these outputs could be used as input to standard algorithms within machine learning, cluster analysis, and other methods.  It is the purpose of this paper to identify an algebra of functions on the set of bar codes which is defined in a conceptually coherent way. 

 The main idea is the following.  A bar code with exactly $n$ intervals can be specified by a vector $(x_1, y_1, x_2, y_2, \ldots, x_n, y_n )$, where $x_i$ denotes the left endpoint of the $i$-th interval and $y_i$ the right endpoint.  However, this representation is many to one, in that the bar code structure does not retain the ordering on the intervals.  In fact, the set of bar codes with exactly $n$ intervals can be identified with the set 
 $$ Sp^n(\Bbb{R}^2)
 $$
 the {\em $n$-fold symmetric product} of $\Bbb{R}^2$.  For any set $X$, $Sp^n(X)$ is defined to be the orbit space of the action of the symmetric group on $n$ letters on the product $X^n$ given by permuting the coordinates. 
 On the other hand, the space $(\Bbb{R}^2)^n$ is an algebraic variety over $\Bbb{R}$ (\cite{fulton}). In fact, it is an affine space of dimension $2n$, and the symmetric group action mentioned above is an algebraic action.  It is then known (see \cite{mumford}) that the orbit space inherits the structure of an algebraic variety, and the elements of its {\em affine coordinate ring} (\cite{fulton}) are functions on the set of bar codes with exactly $n$ intervals.  These affine coordinate rings are well known algebras referred to generically as rings of {\em multisymmetric polynomials} (\cite{D}).   They can be quite complicated, since it turns out that any set of algebra generators for them will satisfy non-trivial relations or {\em syzygies}.  It turns out, though, that there are inclusions of algebraic varieties
\begin{equation}\label{system}   Sp^n(\Bbb{R}^2) \rightarrow Sp^{n+1}(\Bbb{R}^2)
 \end{equation}
 which produce an inverse system  of affine coordinate rings 
 $$  \cdots \rightarrow A[Sp^{n+1}(\Bbb{R}^2)] \rightarrow A[Sp^{n }(\Bbb{R}^2)] \rightarrow \cdots
 $$ whose inverse limit we will denote, by abuse of notation, by $A[Sp^{\infty}(\Bbb{R}^2)]$.  The notation $A[-]$ denotes the affine coordinate ring.  This algebra is known to be freely generated on a set of minimal algebra generators (\cite{D}).   
 
The analysis of the system (\ref{system}) above is not sufficient, though.  This system identifies a point $((x_1, y_1), \ldots , (x_n,y_n)) \in Sp^n(\Bbb{R}^2)$ with the point $$((x_1, y_1), \ldots , (x_n,y_n),(0,0)) \in Sp^{n+1}(\Bbb{R}^2)$$  In other words, a set $S$  of $n$ intervals is identified with the set of $n+1$ intervals obtained by adjoining the interval of length zero whose two endpoints are zero.  However, in the parametrization of the isomorphism classes of persistence vector spaces in \cite{persistence} by barcodes, any interval of length zero is identified with the zero module.  So, we would like to determine the ring of all algebraic functions (i.e. the elements of $A[Sp^{\infty}(\Bbb{R}^2)]$) which have the property that they take the same value on any barcode as on the result of adjoining any interval of length zero to it.  In this paper, we will identify this subring, describe its structure, and  describe the algebra generators explicitly  so that they can be used effectively by those interested in analyzing databases of shapes.

 \section{The Ind-scheme $\frak{B}$}
 
 We first discuss the set of bar codes, without any algebraic variety structures.  For every $n$, we first consider the set of bar codes containing exactly $n$ intervals.  We will permit intervals of length zero.  The set of intervals $\frak{I}$ can be identified with the subset of   $\Bbb{R}^2$ consisting of pairs $(x,y)$ with $x \leq y$.  A bar code containing $n$ intervals is therefore identified with the $n$-fold symmetric product $Sp^n(\frak{I})$, where for any set $X$, $Sp^n(X)$ is defined to be the orbit space of the action of the symmetric group on $n$ letters on the product $X^n$ given by permuting the coordinates.  One can assemble these sets into a directed system 
  $$  \frak{I} \stackrel{i_1}{\rightarrow} Sp^2(\frak{I}) \stackrel{i_2}{\rightarrow} Sp^3(\frak{I}) \stackrel{i_3}{\rightarrow} Sp^4(\frak{I}) \rightarrow \cdots 
 $$
 where the maps $i_n: Sp^n(\frak{I}) \rightarrow Sp^{n+1}(\frak{I})$ are given by 
 $$ i_n(\{I_1, \ldots, I_n\} ) =   \{ I_1, \ldots , I_n, [0,0]\}
 $$
 The direct limit of this system will be denoted by $Sp^{\infty}(\frak{I})$.   We are interested in studying functions on $Sp^{\infty}(\frak{I})$.  Such a function can be identified with an infinite vector $(f_1, f_2, f_3, \ldots )$ of functions $f_n : Sp^n(\frak{I}) \rightarrow \Bbb{R} $ satisfying the compatibility condition 
 $$  f_{n+1} \cdot i_n = f_n
 $$
The set of all such vectors of functions forms a ring ${\cal R}$  under coordinatewise addition and multiplication.  It is not exactly what we want, however.  The reason is that under the parametrization of persistence vector spaces as described in \cite{topdata} and \cite{persistence}, intervals of length zero correspond to zero vector spaces, and therefore all intervals of length zero should be considered equal.   This means that we should consider only functions $F : Sp^{\infty}(\frak{I}) \rightarrow \Bbb{R}$ for which 
$$ F(\{ I_1, I_2, \ldots , I_n, [\xi,\xi] \}) = F(\{ I_1, I_2, \ldots , I_n, [\eta,\eta] \}) 
$$
for all possible values of $\xi$ and $\eta$.  The set of all such functions is a subring ${\cal R}^{\prime} \subseteq {\cal R}$.  This set of functions can be defined as the set of all functions on the set $\frak{B}$ defined by  
$$ \frak{B} =  \coprod _n Sp^n(\frak{I}) / \simeq
$$
where $\simeq $ is the equivalence relation generated by all  relations of the form $ \{I_1, I_2, I_n , [\xi , \xi ] \} \simeq \{ I_1, I_2, \ldots , I_n \}$.  

{\bf Remark:} The reader may suggest that one  consider instead only the subset $\frak{I}^{+}$ consisting of intervals of positive length.  This will produce a disjoint union of sets of barcodes, partitioned into the sets containing a fixed positive number of intervals of positive length.  Such a description does not take into account the fact that we would like to topologize the space of all bar codes in such a way that 
$$ lim_{\epsilon \rightarrow 0} \{ I_1, I_2, \ldots I_n, [x_{n+1}, x_{n+1} + \epsilon]\}= 
\{ I_1, I_2, \ldots , I_n \}
$$
The reason for this is that small perturbations to the input data to the persistence algorithms can modify the barcodes by modifying lengths of intervals a small amount and add intervals of small length.  This is the stability theorem for persistence diagrams proved in \cite{stability}. 

The ring of functions ${\cal R}^{\prime}$ is too large to deal with effectively. Even the much smaller ring of continuous functions on $\frak{B}$ is still too complex to describe completely.  We will observe that $\frak{B}$ is described as a colimit of algebraic varieties, and that it is therefore possible to define the {\em ring of algebraic functions on $\frak{B}$}.  It is this ring we will analyze.

Throughout this paper, $k$ will denote the field $\Bbb{R}$.  All varieties will be over $k$.  We consider the affine space $\frak{A}_n = \Bbb{A}(n)$ of dimension $2n$, parametrized with coordinates $(x_1,y_1, x_2, y_2, \ldots , x_n, y_n)$. Its affine coordinate ring is the polynomial ring $B_n = k[x_1, y_1, \ldots , x_n, y_n ]$.  There is an action of the symmetric group $S_n$ on $n$ letters on $\frak{A}_n$,  and from \cite{mumford} it follows that the set of orbits on the set of points of the variety is itself an affine algebraic variety, with affine coordinate ring equal to the invariant subring $B_n^{S_n}$.     Let $W_i \subseteq \frak{A}_n$ denote the subvariety $y _i - x_i = 0$.  We let $D_n \subseteq B_n$ denote the subring of functions whose restriction to $W_i$ is independent of $x_i$ for all $i$.  We wish to characterize this subring algebraically. 

\begin{prop} The ring $D_n$ is characterized algebraically as the subring of all $f$ for which 
$$  (\frac{\partial}{\partial x_i } + \frac{\partial}{\partial y_i })f \in (y_i -x_i)
$$
for all $i$. 
\end{prop}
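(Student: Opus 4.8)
The plan is to make a linear change of coordinates, one pair of variables at a time, that turns the vector field $\partial/\partial x_i + \partial/\partial y_i$ into a single coordinate derivative and the subvariety $W_i$ into a coordinate hyperplane; once this is done, the condition defining $D_n$ and the claimed differential condition become literally the same statement. Fix an index $i$ and introduce new coordinates $u_i = y_i - x_i$ and $s_i = x_i$, leaving all other $x_j, y_j$ untouched. This substitution is an invertible (triangular) linear change, so $B_n = k[\dots, u_i, s_i, \dots]$ and the ideal $(y_i - x_i)$ coincides with the ideal $(u_i)$. Under it, $\partial/\partial x_i + \partial/\partial y_i = \partial/\partial s_i$, the derivative in the direction along which $u_i$ is held fixed, and $W_i$ becomes the hyperplane $u_i = 0$, whose coordinate ring $B_n/(u_i)$ is the polynomial ring in $s_i$ together with the remaining variables. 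On $W_i$ one has $x_i = y_i = s_i$, so "$f|_{W_i}$ is independent of $x_i$" means precisely that $\partial/\partial s_i$ annihilates the class of $f$ in $B_n/(u_i)$.

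Next I would record two elementary facts. First, differentiation by $s_i$ commutes with reduction modulo $u_i$: writing $f = \sum_{k\ge 0} f_k\, u_i^{\,k}$ with each $f_k$ a polynomial in $s_i$ and the other variables, one has $\partial f/\partial s_i = \sum_k (\partial f_k/\partial s_i)\, u_i^{\,k}$, so the reduction of $\partial f/\partial s_i$ modulo $u_i$ equals $\partial(f \bmod u_i)/\partial s_i$. Second, a polynomial $g$ lies in $(u_i)$ if and only if $g \bmod u_i = 0$ (its constant term in $u_i$ vanishes).

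Combining these yields the chain of equivalences: $f|_{W_i}$ is independent of $x_i$ $\iff$ $\partial/\partial s_i (f \bmod u_i) = 0$ $\iff$ $(\partial f/\partial s_i) \bmod u_i = 0$ $\iff$ $\partial f/\partial s_i \in (u_i)$ $\iff$ $(\partial/\partial x_i + \partial/\partial y_i)f \in (y_i - x_i)$. Since the conditions for distinct $i$ involve disjoint sets of coordinates and are simply intersected in both the definition of $D_n$ and in the proposition, ranging over all $i$ gives exactly $D_n = \{\, f \in B_n : (\partial/\partial x_i + \partial/\partial y_i)f \in (y_i - x_i)\text{ for all }i\,\}$, as claimed.

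There is no substantive obstacle here; the proof is bookkeeping. The only points that require care are orienting the change of variables correctly, so that it is $\partial_{x_i} + \partial_{y_i}$ (and not $\partial_{x_i} - \partial_{y_i}$) that becomes a pure coordinate derivative once $u_i = y_i - x_i$ is held fixed, and keeping the two operations "differentiate in $s_i$" and "set $u_i = 0$" straight, since it is exactly their commuting that lets one translate the geometric condition on the restriction to $W_i$ into membership in the ideal $(y_i - x_i)$.
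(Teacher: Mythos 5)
Your proof is correct and follows essentially the same route as the paper: the paper passes to the quotient $B_n/(y_i-x_i)$ and identifies the induced operator with $2\,\partial/\partial x_i$, which is the same reduction you perform via the coordinates $s_i=x_i$, $u_i=y_i-x_i$. Your explicit verification that $\partial/\partial s_i$ commutes with reduction modulo $u_i$ just spells out the step the paper states as the operator ``inducing'' an operator on the quotient.
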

\begin{Proof} We fix $i$, and consider all the functions $f$ for which $f|W_i$ is independent of $x_i$ (and therefore $y_i$).  The operator $\frac{\partial}{\partial x_i } + \frac{\partial}{\partial y_i }$ induces a differential operator on the quotient ring $Q_n  = B_n /(y_i- x_i)$, which  is identified with the partial differential operator $2 \frac{\partial}{\partial x_i}$ in
$$  Q_n  \cong k[x_1, y_1, \ldots , y_{i-1}, x_i , x_{i+1}, \ldots , x_n, y_n ]  
$$
The requirement is that the image $\overline{f}$  of $f$ in $W_i$ is independent of $x_i$, and this is equivalent to the condition $ \frac{\partial}{\partial x_i}(\overline{f}) = 0$.  This condition is to hold for each $i$, which gives the result. 
\end{Proof}

 \section{The ring of algebraic functions on $\frak{B}$}
 
We begin by changing coordinates via the formulae
  $\xi_i =   x_i +y_i$ and $\eta _i = y_i -x_i$.  It is clear that $B_n$ can also be identified with $k[\xi _1, \eta _1, \ldots, \xi _n, \eta _n ]$, and that the symmetric group in the new coordinate system  permutes the $\xi _i$'s and $\eta _i $'s.  Under this transformation, the operator $\frac{\partial}{\partial x_i } + \frac{\partial}{\partial y_i }$ is carried into the operator $2 \frac{\partial}{\partial \xi _i}$.  This means that the ring $D_n$ is identified with the subring of functions $f(\xi _1, \eta _1,  \ldots , \xi _n, \eta _n )$ for which $\frac{\partial f}{\partial \xi _i} \in (\eta _i)$ for all $i$.

\begin{prop} \label{monomials}  A $k$-basis for the $D_n$ is given by the set of monomials 
$$\xi _1^{a_1}\xi _2 ^{a_2}\cdots \xi _n^{a_n} \eta _1^{b_1} \eta _2 ^{b_2} \cdots \eta _n^{b_n}$$
 for which $a_i >0$ implies $b_i > 0$.  
\end{prop}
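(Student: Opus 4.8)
The plan is to argue directly from the differential characterization of $D_n$ in the $(\xi,\eta)$--coordinates established just before the statement, namely that $f\in D_n$ iff $\frac{\partial f}{\partial\xi_i}\in(\eta_i)$ for every $i$. There are three things to check: the listed monomials are linearly independent, they lie in $D_n$, and they span $D_n$ over $k$. Linear independence is immediate, since they form a subset of the standard monomial basis of $B_n$. For membership, suppose $m=\xi_1^{a_1}\cdots\xi_n^{a_n}\eta_1^{b_1}\cdots\eta_n^{b_n}$ satisfies ``$a_i>0\Rightarrow b_i>0$'' for all $i$. Then for each fixed $i$, the derivative $\frac{\partial m}{\partial\xi_i}$ is either $0$ (when $a_i=0$) or a scalar multiple of $\xi_i^{a_i-1}\prod_{j\ne i}\xi_j^{a_j}\cdot\prod_j\eta_j^{b_j}$; in the latter case $a_i>0$ forces $b_i>0$, so $\eta_i$ still divides the derivative, giving $\frac{\partial m}{\partial\xi_i}\in(\eta_i)$. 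Hence $m\in D_n$, and therefore so is every $k$-linear combination of such monomials.

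For the spanning direction I would first reformulate the membership condition in a more transparent form: $\frac{\partial f}{\partial\xi_i}\in(\eta_i)$ is equivalent to saying that the reduction $f|_{\eta_i=0}$ is independent of $\xi_i$. This is because $\frac{\partial}{\partial\xi_i}$ commutes with the substitution $\eta_i\mapsto 0$ ($\xi_i$ and $\eta_i$ being distinct indeterminates), and a polynomial lies in the ideal $(\eta_i)$ exactly when it vanishes upon setting $\eta_i=0$. Now take an arbitrary $f\in D_n$ and expand it in the monomial basis, $f=\sum_m c_m\,m$. Setting $\eta_i=0$ simply deletes every monomial with $b_i>0$ and leaves the others unchanged, so $f|_{\eta_i=0}=\sum_{m:\,b_i(m)=0}c_m\,m$. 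Since distinct monomials with $b_i=0$ remain distinct after the substitution, independence of $\xi_i$ forces $c_m=0$ whenever $b_i(m)=0$ and $a_i(m)>0$. Letting $i$ range over $1,\dots,n$, we conclude that every monomial appearing in $f$ with nonzero coefficient satisfies ``$a_i>0\Rightarrow b_i>0$'' for all $i$, i.e.\ $f$ is a $k$-linear combination of the listed monomials.

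I do not expect a genuine obstacle here; the only points needing a little care are the equivalence between the ideal-membership condition and the ``reduction is $\xi_i$-independent'' condition, and the observation that no cancellation can occur among the monomials surviving in $f|_{\eta_i=0}$ (each monomial has a unique image under the substitution), which is what licenses the coefficientwise conclusion. Once these are noted, the argument is a short bookkeeping exercise. One could instead prove spanning by a direct induction, peeling off one pair of variables at a time, but the reduction-mod-$\eta_i$ formulation makes such an induction unnecessary.
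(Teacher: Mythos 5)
Your proof is correct and follows essentially the same route as the paper: both arguments rest on the observation that the defining condition $\partial f/\partial\xi_i\in(\eta_i)$ acts monomial-by-monomial, so $D_n$ is spanned by exactly those monomials satisfying ``$a_i>0\Rightarrow b_i>0$''. Your reformulation via setting $\eta_i=0$ and comparing coefficients is just a more explicit spelling-out of the paper's statement that the derivative sends monomials to monomials and ideal membership is a monomial condition.
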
 
\begin{Proof} We note that the operator $\partial / \partial \xi _i$ carries each monomial to a constant multiple  of a single monomial, namely the monomial obtained by decreasing $a_i$ by one.  Moreover, containment in the ideal $(\eta _i)$ is also given purely by conditions on monomials, i.e. that $b_i > 0$.  We conclude that $D_n$  is spanned by monomials lying in $D_n$.  But it is clear that a monomial $\mu$  lies in $D_n$ exactly if it is the case that whenever $\xi_i$ divides $\mu$, then $\eta _i$ also divides $\mu$.  This  corresponds to the above numerical condition on the exponents in the monomial.   \end{Proof} 

 The symmetric group  action  clearly preserves the subring $D_n$.  Moreover, it preserves the basis of monomials within $D_n$.  Let $\{ \mu _{\alpha}\}_{\alpha \in A} $ denote a set of orbit representatives of the $S_n$-action on the set of monomials defined in Proposition \ref{monomials}.  Let $\sigma _{\alpha}$ denote the sum of all the elements in the orbit of $\mu _{\alpha}$.  

\begin{prop} \label{count} We let $D_n^{S_n}$ denote the subring of elements of $D_n$ which are invariant under the action of $S_n$.  Then the elements 
$\sigma _{\alpha}$ form a $k$-basis of $D_n^{S_n}$.  
\end{prop}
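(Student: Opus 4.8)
The plan is to reduce everything to linear algebra over $k$, exploiting the fact (already established in Proposition \ref{monomials}) that $D_n$ has a monomial $k$-basis which the $S_n$-action permutes. Concretely, write $M$ for the set of monomials $\xi_1^{a_1}\cdots\eta_n^{b_n}$ satisfying the condition ``$a_i>0 \Rightarrow b_i>0$''; then $D_n = \bigoplus_{\mu\in M} k\mu$ as a $k$-vector space, and the $S_n$-action is given by a permutation of $M$. The first step is to observe that for a $k$-linear representation of a finite group with a permutation basis, the invariant subspace has, as a $k$-basis, the set of orbit sums of basis elements. This is the standard averaging argument: given $f=\sum_{\mu\in M} c_\mu\mu \in D_n^{S_n}$, invariance forces $c_{g\mu}=c_\mu$ for all $g\in S_n$ and all $\mu$, so $c$ is constant on each orbit, hence $f = \sum_\alpha c_{\mu_\alpha}\sigma_\alpha$; conversely each $\sigma_\alpha$ is manifestly $S_n$-invariant.

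Second, I would check linear independence of the $\sigma_\alpha$: distinct orbits are disjoint subsets of $M$, and $M$ itself is a $k$-linearly independent set (being part of a basis of $B_n$), so the $\sigma_\alpha$, having pairwise disjoint supports in terms of the monomial basis, are linearly independent. Combined with the spanning statement from the previous paragraph, this shows $\{\sigma_\alpha\}_{\alpha\in A}$ is a $k$-basis of $D_n^{S_n}$ as a $k$-vector space, which is exactly the assertion. One small point worth making explicit is that the orbit sum $\sigma_\alpha$ is well-defined independent of the choice of representative $\mu_\alpha$ within its orbit, and that it genuinely lies in $D_n$ — this is immediate since $S_n$ preserves $M$, so the entire orbit of $\mu_\alpha$ sits inside $M\subseteq D_n$.

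I do not expect any genuine obstacle here: the proposition is essentially the observation that ``invariants of a permutation module have orbit sums as a basis,'' specialized to the monomial basis of $D_n$. The only thing requiring a word of care is the interplay between the two group actions/structures — that the $S_n$-action on $B_n$ restricts to $D_n$ and permutes the chosen monomial basis — but this was already recorded in the remarks preceding the statement (``The symmetric group action clearly preserves the subring $D_n$. Moreover, it preserves the basis of monomials within $D_n$''), so it may be invoked directly. Thus the proof is short: invoke Proposition \ref{monomials} for the permutation basis, run the averaging argument to get spanning, and use disjointness of orbit supports for independence.
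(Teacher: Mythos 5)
Your proposal is correct and is essentially the paper's own argument: the paper simply invokes the general fact that for an algebra with a group action permuting a monomial basis the orbit sums give a basis of the invariants, and you have spelled out exactly that standard argument (coefficients constant on orbits for spanning, disjoint supports for independence). Your version even makes clear that the characteristic-zero hypothesis mentioned in the paper is not actually needed for this permutation-basis argument, but otherwise the two proofs coincide.
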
 
\begin{Proof}
This result plainly holds for any algebra over a field of characteristic zero on  which there is a $G$-action which preserves a basis of monomials.  
\end{Proof} 

We have restriction maps $\pi _{n,m}: D_n \rightarrow D_m$, when $n \geq m$, defined by $\pi _{n,m}(\xi _i (\mbox{resp } \eta _i)) = \xi _i(\mbox{resp } \eta _i)$ for $i \leq m$, and $\pi _{n,m} (\xi _i) = 0 $ for $i > m$.  The map $\pi _{n,m}$ is $S_m$-equivariant, where $S_m$ acts by permuting the first $m$ pairs of variables. It follows that we may construct composites 
$$   D_n^{S_n} \hookrightarrow D_n^{S_m} \stackrel{\pi_{n,m}^{S_{m}} }\rightarrow D_m ^{S_m} 
$$
which we denote by $\sigma _{n,m}$, and therefore the inverse system 

$$  \cdots\stackrel{\sigma  _{n+1,n}}{\longrightarrow} D_n^{S_n} \stackrel{\sigma  _{n,n-1}}{\longrightarrow} D_{n-1}^{S_{n-1}}\stackrel{\sigma  _{n-1,n-2}}{\longrightarrow} \cdots  
\stackrel{\sigma  _{2,1}}{\longrightarrow} D_1
$$
We will denote the inverse limit of this system by $\frak{D}$.

We next recall some of the notation and basic facts about multisymmetric polynomials, which can be found in Dalbec \cite{D}.
Let $R_{n,r}$ be the polynomial ring in $nr$ variables,
\[R_{n,r} = k[x_{i,j};1 \leq i \leq n, 1 \leq j \leq r]\]
We let the symmetric group $S_n$ act on $R_{n,r}$ via the formula $\sigma(x_{ij}) = x_{\sigma(i)j}$, and let 
\[\Lambda_{n,r} = R_{n,r}^{S_n},\]
denote the ring of $S_n$ invariants. 
There is an inverse system parallel to the one constructed above involving the rings $\Lambda_{n,r}$. We have  evaluation maps
\[\pi_{n,m} : R_{n,r} \rightarrow R_{m,r},\quad m \leq n\]
defined by setting $x_{ir}=0$ if $i>m$.   The map $\pi _{n,m}$ is $S_m$-equivariant, when $S_m \subseteq S_n$ is the subgroup of permutations of the first $m$ elements of the set $\{ 1, \ldots , m \}$.  We have the composites 

$$ \Lambda _{n,r} = R_{n,r}^{S_n} \hookrightarrow R_{n,r}^{S_m} \stackrel{\pi_{n,m}^{S_{m}} }\rightarrow R_{m,r}^{S_m} = \Lambda _{m,r}
$$
which we denote by $\rho _{n,m}$.  
The inverse limit of the system 
$$  \cdots\stackrel{\rho _{n+1,n}}{\longrightarrow} \Lambda _{n,r} \stackrel{\rho _{n,n-1}}{\longrightarrow} \Lambda _{n-1, r} \stackrel{\rho _{n-1,n-2}}{\longrightarrow} \cdots  
\stackrel{\rho _{2,1}}{\longrightarrow} \Lambda _{1,r}
$$
will be denoted by $\Lambda _r$, and referred to as the ring of {\em r-multisymmetric} functions.  
 It has a grading
\[\Lambda_{r} = \bigoplus_k \Lambda^k_r\]
induced by the grading on $R_{n,r}$.  There is an evident embedding $\frak{D} \hookrightarrow \Lambda _2$. We will use this embedding to identify the structure of $\frak{D}$.   

The ring of multisymmetric functions has several interesting sets of generators. Given an array of nonnegative integers 

$$ \left[ \begin{array}{cccc}
 a_{11}  & a_{12}  & \cdots & a_{1r}  \\
a_{21}  & a_{22}  &  \cdots & a_{2r}  \\
 \vdots   &  \vdots  &  \ddots   &  \vdots \\
 a_{k1}   & a_{k2}  & \cdots  &  a_{kr}   
\end{array} 
\right]
$$
with $k \leq n$ and 
${\bf a}_i = (a_{i1},...,a_{ir}) 
$,
we define the multisymmetric monomials by
\[m_{{\bf a}_1,...,{\bf a}_k} = \Sym (x_{11}^{a_{11}}\cdots x_{kr}^{a_{kr}}) = \sum _{\sigma \in S_n} \prod _{i,j } x_{{\sigma(i)}j }^{a_{ij}}\in \Lambda_{r},\]
 $\Sym$ applied to  a monomial yields the sum of all monomials which are in the orbit of the $S_n$-action.  

They form a vector space basis of $\Lambda_{n,r}$, for any $n$.
It is known that $\Lambda_{n,r}$ is generated as an algebra by the symmetrizations of  monomials involving only $\{ x_{11}, x_{12}, \ldots , x_{1r} \} $.  They are given by the formulae
\[p_{{\bf a}} = m_{{\bf a}} = \sum_i x_{i1}^{a_1}\cdots x_{ir}^{a_r},\]
and are called the multisymmetric \emph{power sums}.
While there are relations among the power sums in finitely many variables, they freely generate the inverse limit $\Lambda_{r}$,
making it a polynomial algebra. See \cite{D} for details.

Our interest is  in  the case $r=2$. Let us set
\[\xi _i = x_{i1},\quad \eta _i = x_{i2},\]
and as before
\[B_n = R_{n,2} = k[\xi _1, \eta _1,...,\xi _n,\eta _n].\]
%
%
%
%
%
%
The subalgebra $\frak{D} \subseteq \Lambda _2$ now has the following characterization. 
\begin{thm}
As a subalgebra of $\Lambda_2$, $\frak{D}$ is freely generated by the set $\Delta$ of elements of the form $
p_{a,b}$ where 
$b \geq 1$.
\end{thm}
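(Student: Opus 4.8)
The plan is to deduce the statement from the fact, recalled above from \cite{D}, that $\Lambda_2$ is a polynomial ring freely generated by the power sums $p_{\mathbf a}$. It is convenient to work throughout in the coordinates $\xi_i,\eta_i$, writing $p^{\xi\eta}_{c,d}=\sum_i\xi_i^c\eta_i^d$ for the power sums and $m^{\xi\eta}_{\mathbf a_1,\dots,\mathbf a_k}$ for the multisymmetric monomials in these coordinates; since $(\xi_i,\eta_i)$ is an invertible linear substitution for $(x_i,y_i)$, this merely reindexes $\Lambda_2$ and its free generating set. Call a vector $(c,d)\in\mathbb{N}^2\setminus 0$ \emph{good} if $d\ge 1$, and call a multisymmetric monomial good if each of its exponent vectors is good. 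Combining Proposition \ref{monomials} with the orbit-sum basis of $D_n^{S_n}$ and passing to the inverse limit, one sees that $\mathfrak{D}$ is precisely the $k$-span in $\Lambda_2$ of the good multisymmetric monomials. Let $G\subseteq\Lambda_2$ be the subring generated by the good power sums $p^{\xi\eta}_{c,d}$ with $d\ge 1$; being generated by part of a free generating set, $G$ is a polynomial ring on these elements.

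\emph{Step 1: $\mathfrak{D}=G$.} The inclusion $G\subseteq\mathfrak{D}$ is immediate, since each good power sum is the good monomial $m^{\xi\eta}_{(c,d)}$ and $\mathfrak{D}$ is a subring. For $\mathfrak{D}\subseteq G$ I would show that every good monomial lies in $G$, by induction on the number $k$ of parts. Expanding a product of power sums in the monomial basis gives $\prod_{j=1}^k p^{\xi\eta}_{\mathbf a_j}=c\,m^{\xi\eta}_{\mathbf a_1,\dots,\mathbf a_k}+(\text{a sum of multisymmetric monomials with fewer than }k\text{ parts})$, with $c$ a positive integer, where every exponent vector occurring in the lower-order terms has the form $\sum_{j\in B}\mathbf a_j$ for a block $B$ of a set partition of $\{1,\dots,k\}$ (group the index tuples in $\prod_j p^{\xi\eta}_{\mathbf a_j}$ by the partition they induce; see \cite{D}). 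If all $\mathbf a_j$ are good, then every such block-sum is good, since its $\eta$-exponent is at least $|B|\ge 1$; hence each lower-order term is a good monomial with fewer parts, lying in $G$ by the inductive hypothesis, while $\prod_j p^{\xi\eta}_{\mathbf a_j}\in G$ by definition of $G$. Thus $m^{\xi\eta}_{\mathbf a_1,\dots,\mathbf a_k}\in G$; the base case $k=1$ is trivial. Since the good monomials span $\mathfrak{D}$, this yields $\mathfrak{D}\subseteq G$, hence $\mathfrak{D}=G$.

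\emph{Step 2: $G$ is freely generated by the $q_{a,b}:=p_{a+1,b}-p_{a,b+1}$, $a,b\ge 0$.} From $q_{a,b}=\sum_i(x_i^{a+1}y_i^b-x_i^ay_i^{b+1})=-\sum_i x_i^ay_i^b\,\eta_i$ and the linearity of $x_i,y_i$ in $\xi_i,\eta_i$, expanding shows that $q_{a,b}$ is a $k$-linear combination of power sums $p^{\xi\eta}_{c,d}$ with $d\ge 1$; so $q_{a,b}\in G$ and $k[q_{a,b}]\subseteq G$. For the reverse inclusion and for freeness, observe that $\{q_{a,b}:a,b\ge 0\}\cup\{p_{0,b}:b\ge 1\}$ is obtained from the free generating set $\{p_{c,d}:(c,d)\ne 0\}$ of $\Lambda_2$ by a graded, unitriangular (hence invertible) change of generators — in total degree $n$ one has $p_{0,n}=p_{0,n}$ and $p_{c,n-c}=q_{c-1,n-c}+p_{c-1,n-c+1}$ for $c\ge 1$ — so it too is a free generating set; in particular the $q_{a,b}$ are algebraically, hence linearly, independent. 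In each total degree $n$ there are exactly $n$ of the $q_{a,b}$, and by the computation above they lie in the $n$-dimensional space $\langle p^{\xi\eta}_{c,d}:c+d=n,\ d\ge 1\rangle$, so they span it. Therefore every good power sum lies in $k[q_{a,b}]$, whence $G\subseteq k[q_{a,b}]$. Combining the two steps, $\mathfrak{D}=G=k[q_{a,b}:a,b\ge 0]$ is freely generated by the elements $p_{a+1,b}-p_{a,b+1}$.

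The step I expect to need the most care is the monomial/power-sum transition in Step 1, together with the simple but essential observation that a sum of good exponent vectors is again good; the rest is bookkeeping, the main pitfall being to keep the two coordinate systems $x,y$ and $\xi,\eta$ apart, since the monomial basis of $\mathfrak{D}$ is transparent in the $\xi,\eta$ coordinates whereas the generators $q_{a,b}$ are most naturally written in the $x,y$ coordinates.
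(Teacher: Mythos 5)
Your argument is correct, and it reaches the theorem by a genuinely different route from the paper on the main point, namely why the ring generated by the elements $p_{a+1,b}-p_{a,b+1}$ is all of $\frak{D}$ rather than a proper subring. The paper settles this by a Hilbert series comparison: it counts the invariant monomial basis of $D_n^{S_n}$ (via the bijection $\varphi$ and a recursion), passes to the limit to get $P(\frak{D})=\prod_{k\ge 1}(1-t^k)^{-k}$, and matches this against the generating function of a free algebra with $k$ generators in degree $k$. You instead identify $\frak{D}$ structurally: in the $\xi,\eta$ coordinates it is the span of the multisymmetric monomials all of whose exponent vectors have $\eta$-exponent at least $1$, and the standard expansion of a product of power sums into multisymmetric monomials (with block-sum exponent vectors, invertible leading coefficient in characteristic zero) shows this span is exactly the polynomial ring on the ``good'' power sums $p^{\xi\eta}_{c,d}$, $d\ge 1$; a degree-by-degree linear change of generators then converts these into the $q_{a,b}=p_{a+1,b}-p_{a,b+1}$. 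The freeness part is essentially the same in both treatments (your unitriangular substitution is the paper's automorphism $p_{a+1,b}\mapsto p_{a+1,b}-p_{a,b+1}$, $p_{0,b}\mapsto p_{0,b}$), and your verification that $q_{a,b}=-\sum_i x_i^a y_i^b\eta_i$ lies in $\frak{D}$ makes explicit what the paper leaves as ``simple to check.'' What the two approaches buy: the paper's route yields the Hilbert series of $\frak{D}$ as an explicit and independently interesting by-product, whereas yours avoids the counting/limit manipulation entirely, exhibits a second natural free generating set (the good $\xi\eta$-power sums), and recovers the same Hilbert series for free since you produce exactly $n$ generators in each degree $n$. The only places needing the care you already flag are the identification of $\frak{D}$ with the span of good monomials inside $\Lambda_2$ (which matches the paper's ``evident embedding'' once one notes the restriction maps send monomial orbit sums to monomial orbit sums) and the fact that the $\xi\eta$-power sums again freely generate $\Lambda_2$, which follows because the coordinate change induces a graded automorphism of $\Lambda_2$ carrying $p_{c,d}$ to $p^{\xi\eta}_{c,d}$.
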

\begin{Proof}  We first consider the subalgebra $k[\Delta ] \subseteq \frak{D}$ generated by $\Delta$. Because $\Delta$ is a subset of the free generating set of $\Lambda _2$, it is clear that the composite 
$$ k[\Delta ] \rightarrow \frak{D} \hookrightarrow \Lambda _2
$$
is injective and isomorphic onto a polynomial subalgebra, and therefore that $k[\Delta ]$ is itself a polynomial subalgebra of $\frak{D}$.  In order to prove that $k[\Delta ] = \frak{D}$, we only need to count dimensions, and we formulate the counting in terms of the Hilbert series.  Recall that for a graded $k$-vector space $V_*$, we have the Hilbert series 
$$ P(V_*) = \sum _i \mbox{dim}_k(V_i )t^i
$$
The Hilbert series for a polynomial algebra on a single generator $x$ of grading $i$ is $(1-t^i)^{-1}$. Moreover, if we are given two graded vector spaces $V_*$ and $W_*$, then $P(V_* \otimes W_*) = P(V_*)P(W_*)$.  Since there are $n$ monomials of degree $n$ in $\Delta$, we find that the Hilbert series for $k[\Delta]$ is 
$$ P(k[\Delta])= \prod _n (1-t^n)^{-n}
$$
If we can show that the Hilbert series for $\frak{D}$ is equal to this series, the proof will be complete.  

In Proposition \ref{count}, we found that a $k$-basis for $\frak{D}$ may be identified with a set of orbit representatives of the $S_n$-action on the set of all monomials which have the property that if the exponent of $\xi _i$ is non-zero, then so is the exponent of $\eta _i$.   Such a set of representatives is given by  the set of monomials of the form
\[\xi _1^{a_1}\eta _1^{b_1} \cdots \xi _l^{a_l}\eta _l^{b_l},\quad  \varphi^{-1}(a_i,b_i) \geq \varphi^{-1}(a_{i+1},b_{i+1})\]
where $l \leq n$, and $\varphi : \mathbb{N}^+ \rightarrow \mathbb{N}^+\times \mathbb{N} $ is the bijection
\[\left(\varphi_1,\varphi_2,...\right) = \left((1,0),(1,1),(2,0),(1,2),(2,1),(3,0),(1,3),...\right)\]
onto the set of possible nonzero exponents.
The dimension of the $k$-graded component of $B^{S_n}_n$ is just the number of these monomials of degree $k$.
Let us say that $(a,b) \leq (c,d)$ when $\varphi^{-1}(a,b) \leq \varphi^{-1}(c,d)$, and let $f(a,b,k)$ denote the number of sequences $(a_1,b_1,...,a_l,b_l)$ such that
\[(a,b) \geq (a_1,b_1) \geq \cdots \geq (a_l,b_l),\quad (a_i,b_i) \in \mathbb{N}^+\times \mathbb{N}\]
and 
$$ \sum _1^l (a_i+b_i) = k
$$
with no restrictions on $l$. It is easy to check that it satisfies the recursion relation
\[f(a,b,k) = \sum_{(c,d) \leq (a,b)} f(c,d,k-c-d),\]
This corresponds to a rule for the generating function $f_{a,b}(t) = \sum _k f(a,b,k) t^k$,
\[f_{a,b}(t)  = 1+\sum_{(c,d) \leq (a,b)} t^{c+d}f_{c,d}(t) = 1+\sum_{(c,d) < (a,b)} t^{c+d}f_{c,d}(t) + t^{a+b} f_{a,b}(t)\]
Solving for $f_{a,b}(t)$ gives
\[(1-t^{a+b})f_{a,b}(t) = 1+\sum_{(c,d) < (a,b)} f_{c,d}(t) = f_{a',b'}(t),\]
where $(a',b')$ is the element immediately below $(a,b)$ under $\varphi$.
It is readily verified that the formula 
\[g_{a,b}(t) = (1-t^{a+b})^{-a}\prod_{1 \leq k \leq a+b-1} (1-t^k)^{-k},\]
satisfies the same recursion relation, and therefore that $f_{a,b}(t) = g_{a,b}(t)$.  Taking limits 
\[\lim_{n \rightarrow \infty} P(B^{S_n}_n) = \lim_{(a,b) \rightarrow \infty} \sum_{k\geq 1} f(a,b,k)t^k = \prod_{k \geq 1} (1-t^k)^{-k}\]
gives the result.

\end{Proof}

\section{Machine Learning on $\frak{B}$ with examples}
\subsection{Digits Example}
To illustrate the classification potential of this technique, we apply it to the MNIST database \cite{MNIST}, of handwritten digits.  We emphasize that the aim is not to outperform existing machine learning algorithms for digit classification, but to present an example that demonstrates one way of combining this technique with existing machine learning techniques.  While it is clear that pure topological classification cannot distinguish between the digits (there are three numbers that do not have any loops, three that always have loops, one that has two loops and three that have style-dependent loops), we can use the power of persistent homology to sift out more information.  We begin by showing the full analysis of a few digits and then give the empirical results of applying this technique to a subset of the MNIST database.

\subsubsection{Topological Methods}
\label{sec:digtop}
We begin by describing a particular graph construction given a digital image.  We treat the pixels as vertices and add edges between adjacent pixels (including diagonals).  We can now define a filtration on the vertices of the graph corresponding to the image pixels.  A natural filtration could be constructed using the pixel intensities of the original image (see Figure \ref{fig:filt}, Section \ref{sec:lesion}).  Another filtration, used in \cite{curves}, can be constructed by thresholding, to produce a binary image, and adding 1-pixels as we sweep across the image.  This adds spatial information into what would otherwise be a purely topological measurement.  Since the orientation of the digit matters (a 6 is the same as a 9 given a 180 degree rotation), we choose the latter approach and sweep across the rows and columns of each digit.

By taking into account spatial information, we get a rough view of the location of various topological features.  For example, though a `9' and `6' both have one connected component and a single loop, the loop will appear at different locations in the top-down filtration for the `9' and `6'.  The digits and one of the resulting barcodes are shown in Figures \ref{fig:no_loop} and \ref{fig:loop}.  Using all four sweeps, and both the Betti 0 and Betti 1 barcodes, reveals additional differences between each of the digits.

\begin{figure}[h!]
\begin{subfigure}{0.15\textwidth}
\includegraphics[width=0.5in]{./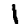}
\end{subfigure}
\begin{subfigure}{0.3\textwidth}
\includegraphics[width=5in]{./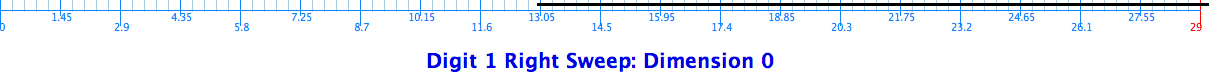}
\end{subfigure}

\begin{subfigure}{0.15\textwidth}
\includegraphics[width=0.5in]{./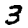}
\end{subfigure}
\begin{subfigure}{0.3\textwidth}
\includegraphics[width=5in]{./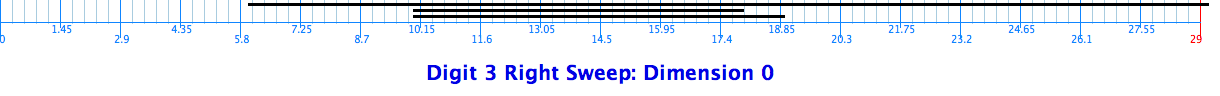}
\end{subfigure}

\begin{subfigure}{0.15\textwidth}
\includegraphics[width=0.5in]{./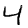}
\end{subfigure}
\begin{subfigure}{0.3\textwidth}
\includegraphics[width=5in]{./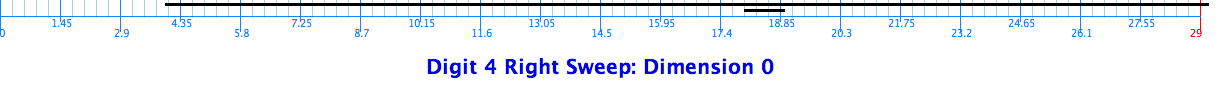}
\end{subfigure}

\begin{subfigure}{0.15\textwidth}
\includegraphics[width=0.5in]{./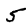}
\end{subfigure}
\begin{subfigure}{0.3\textwidth}
\includegraphics[width=5in]{./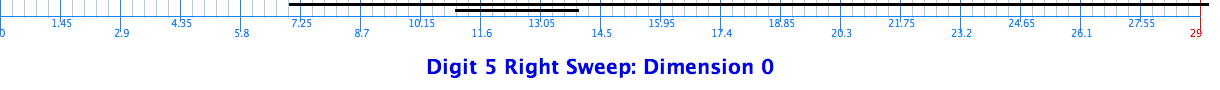}
\end{subfigure}

\begin{subfigure}{0.15\textwidth}
\includegraphics[width=0.5in]{./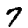}
\end{subfigure}
\begin{subfigure}{0.3\textwidth}
\includegraphics[width=5in]{./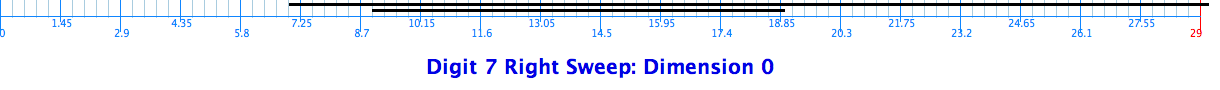}
\end{subfigure}

\caption{No Loop Digits with Betti 0 barcode, sweep to right}
\label{fig:no_loop}
\end{figure}

\begin{figure}[h!]
\begin{subfigure}{0.15\textwidth}
\includegraphics[width=0.5in]{./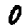}
\end{subfigure}
\begin{subfigure}{0.15\textwidth}
\includegraphics[width=5in]{./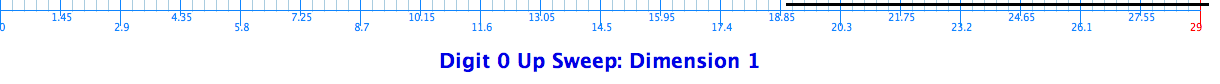}
\end{subfigure}

\begin{subfigure}{0.15\textwidth}
\includegraphics[width=0.5in]{./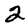}
\end{subfigure}
\begin{subfigure}{0.15\textwidth}
\includegraphics[width=5in]{./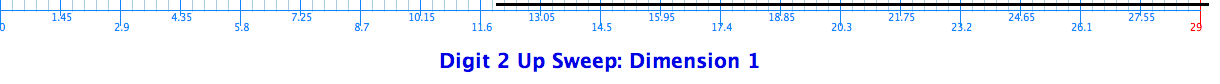}
\end{subfigure}

\begin{subfigure}{0.15\textwidth}
\includegraphics[width=0.5in]{./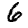}
\end{subfigure}
\begin{subfigure}{0.3\textwidth}
\includegraphics[width=5in]{./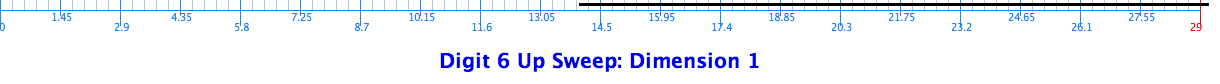}
\end{subfigure}

\begin{subfigure}{0.15\textwidth}
\includegraphics[width=0.5in]{./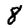}
\end{subfigure}
\begin{subfigure}{0.3\textwidth}
\includegraphics[width=5in]{./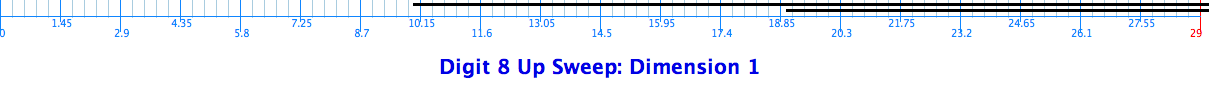}
\end{subfigure}

\begin{subfigure}{0.15\textwidth}
\includegraphics[width=0.5in]{./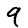}
\end{subfigure}
\begin{subfigure}{0.3\textwidth}
\includegraphics[width=5in]{./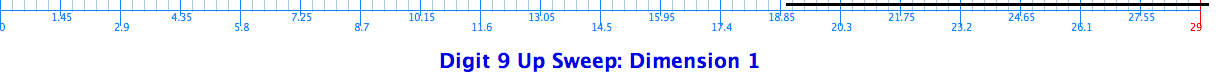}
\end{subfigure}

\caption{Loop Digits with Betti 1 barcode, sweep to top}
\label{fig:loop}
\end{figure}

\subsubsection{Feature Selection}

We can use the techniques described in this paper to coordinatize the barcode space $\frak{B}$.  In machine learning terminology, these coordinates are called \emph{features}.  This allows us to characterize the barcodes generated by each data point as a compact feature vector.  This also gives us great flexibility in selecting features that work well with our data.  We can then apply a standard machine learning algorithm, such as a support vector machine (SVM), to classify the data.

We selected a set of four features from the invariants discussed in this paper.  Intuitively, the exponents in each polynomial will give the relative value of small bars or endpoints compared to large bars or endpoints.  For example, if comparing two bars of length $\frac{b}{2}$ and $b$, the first bar will have more weight in an invariant linear polynomial than in an invariant quadratic polynomial.  Indeed, 
\begin{align*}
\left(\frac{b}{2}\right)^2 &= \frac{b^2}{4}, \\
\left(\frac{b}{2}\right)^3 &= \frac{b^3}{8}, \\
\left(\frac{b}{2}\right)^4 &= \frac{b^4}{16}, \\
&\mathrel{\makebox[\widthof{=}]{\vdots}}
\end{align*}

We selected four features,

\begin{align*}
\sum_{i} &x_{i}(y_{i}-x_{i}) \\
\sum_{i} &(y_{max}-y_{i})(y_{i}-x_{i}) \\
\sum_{i} &x_{i}^{2}(y_{i}-x_{i})^4 \\
\sum_{i} &(y_{max}-y_{i})^2(y_{i}-x_{i})^4 
\end{align*}
which when applied to the four sweeps, each with a 0-dimensional and 1-dimensional barcode, gives a feature vector of total size 32 which we then arranged into a feature matrix.   Intuitively speaking, the first two features take all of the bars, lengths and endpoints, into account.  The second two features heavily favor the arrangement of longer bars.    A visualization of a matrix of 10,000 digits using classical multidimensional scaling (MDS) is shown in Figure \ref{fig:viz} and the spectrum of the matrix is shown in Figure \ref{fig:lab}.

\begin{figure}
\begin{subfigure}{\textwidth}
\centering
\includegraphics[width=5in]{./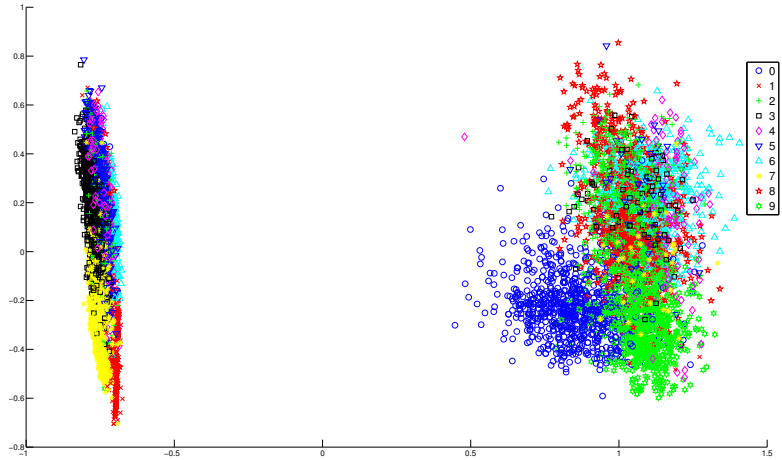}
\caption{A 2D View of the Data}
\end{subfigure}

\begin{subfigure}{\textwidth}
\centering
\includegraphics[width=5in]{./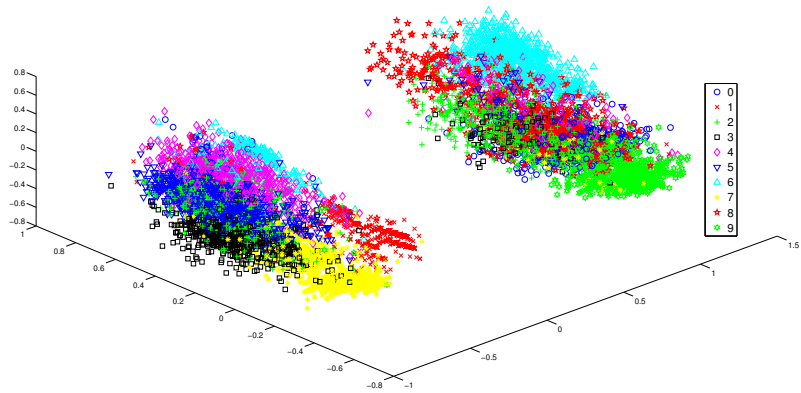}
\caption{A 3D View of the Data}
\end{subfigure}
\caption{Visualization of Data using Topological Features}
\label{fig:viz}
\end{figure}

\begin{figure}[h]
\centering
\includegraphics[width=5in]{./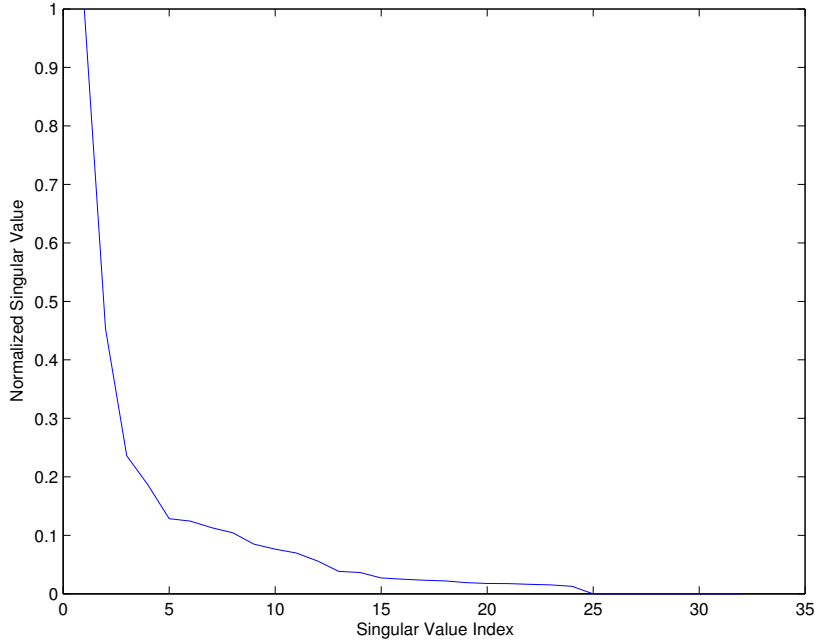}
\caption{Normalized Spectrum of Topological Feature Matrix}
\label{fig:lab}
\end{figure}

As is typical when using a SVM, we scaled each coordinate such that the values were between 0 and 1.  The SVM was implemented using software provided by \cite{libsvm}.

\subsubsection{Classification Results}
We applied these methods on a subset of 1000 digits from the MNIST database to tune parameters of the algorithm and test various kernels.  For the radial basis function $e^{-\gamma |u-v|^2}$ (RBF, also known as the Gaussian kernel),  we used $\gamma =8$. For the polynomial kernel $(\gamma(u*v) + a)^d$, we used $d=3$ with $\gamma = 2$ and $a=2$.  In both functions, $u$ and $v$ represent the calculated feature vectors.  After this, we progressively increased the size of the subset to 10,000 handwritten digits.

The classification accuracy was measured by partitioning the data set into one hundred subsets and using cross-validation successively on each subset.  The results are shown in Table \ref{tbl:SVM}.

\begin{table}[h]
\caption{Classification Accuracy of two SVM Kernels}
\label{tbl:SVM} 
\begin{center}
\begin{tabular}[h]{| l | c | c | c |}
\hline
SVM & 1000 Digits & 5000 Digits & 10000 Digits \\ \hline
Gaussian & 87.70\% & 91.54\% & 92.04\% \\ \hline
Polynomial & 88.00\% & 91.62\% & 92.10\% \\ \hline
\end{tabular}
\end{center}
\end{table}

With the polynomial kernel, an error of $7.9\%$ is seen.   As mentioned above, the purpose of this test is not to outperform existing classification algorithms but to demonstrate one application of the topological features.  In line with this, we examined some of the digits that the algorithm failed on.  Figure \ref{fig:prob} shows a few of the typical problem digits.  

\begin{figure}[h!]
\begin{subfigure}{\textwidth}
\centering
\includegraphics[width=0.9in]{./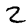}
\includegraphics[width=0.9in]{./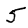}
\includegraphics[width=0.9in]{./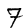}
\includegraphics[width=0.9in]{./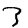}
\includegraphics[width=0.9in]{./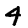}
\caption{Stylistic Problems}
\end{subfigure}

\begin{subfigure}{\textwidth}
\centering
\includegraphics[width=0.9in]{./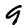}
\includegraphics[width=0.9in]{./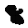}
\includegraphics[width=0.9in]{./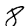}
\includegraphics[width=0.9in]{./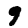}
\includegraphics[width=0.9in]{./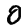}
\caption{Spurious Topological Changes}
\end{subfigure}

\caption{Common Misclassifications}
\label{fig:prob}
\end{figure}

The most common confusion is between a `5' and a `2' written with no loop. Other confusions often occur between the shown style of `7' and slanted `3's and between a certain style of `4' and a `9'.  These confusions are not unexpected since these numbers are topologically the same.  The extra spatial information added by the directional sweeps is sensitive to variations in the slant or style of handwriting and a visual inspection of these digits suggests why the algorithm has difficulty classifying these particular examples.  Other common confusions occur when topological changes occurred to the digit, specifically when the writer adds or removes a loop.

\subsection{Hepatic Lesion Classification}
\label{sec:lesion}
In this example, we apply topological features to classifying hepatic lesions.  The dataset consists of computed tomography (CT) scans of 132 hepatic lesions that are outlined and annotated by radiologists.  There are nine diagnoses represented in the data: cysts (45 lesions), metastases (45 lesions), hemangiomas (18 lesions), hepatocellular carcinomas (HCC, 11 lesions), focal nodules (5 lesions), abscesses (3 lesions), neuroendocrine neoplasms (NeN, 3 lesions), a single laceration and a single fat deposit.  Additionally, there are no controls for the size of the lesion and the lesions vary from under 100 pixels to 10,000 pixels.  Because of the unbalanced nature of the data, we focus on the subset of cysts, metastases, and hemangiomas.

Classification results using the barcode metric (matching metric) were first presented in \cite{metric lesion}, and we follow the same methods for processing and generating barcodes from the data.  We will briefly describe the methods here.  For a more detailed account, please read \cite{metric lesion}.

\subsubsection{Topological Methods}
As mentioned above, a natural filtration for an image is to filter by the pixel intensity.  An example of this filtration is given in Figure \ref{fig:filt}.  The variation in pixel intensity allows us to use a one-dimensional filtration on the pixel intensity, but as the results will show, the classification is improved when geometric information is added into the filtrations.

As there is no rotational orientation of the lesions, we cannot add in geometric information using the sweeps described in the previous section.  Instead, we use the lesion border provided by the radiologist and assign each pixel its distance from the border.  Then, by using two-dimensional homology, we achieve improved results, especially in the case of the hemangiomas which are characterized by large dense regions on the outer part of the lesion.  Because two-dimensional filtrations are computationally intensive, we approximate the two-dimensional filtration with one-dimensional barcode `slices' along the border filtration axis.  We use 7 slices per lesion and both the Betti 0 and Betti 1 barcodes.

Note that we can look at each filtration from each direction and catch different features.  The intensity filtration can add high intensity pixels first or low intensity pixels first.  The boundary filtration can begin with pixels near the boundary first or pixels far from the boundary first.  This yields 56 one-dimensional barcodes per lesion.

\begin{figure}[!ht]

\begin{subfigure}{\textwidth}
\center
\includegraphics[width=4.75in]{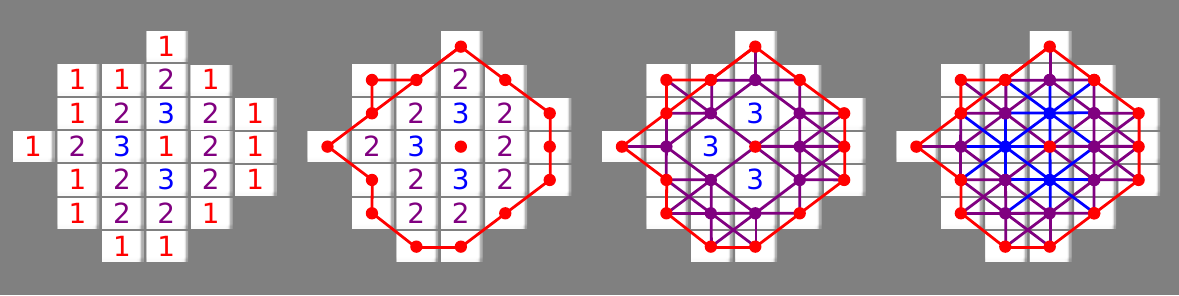}
\caption{Simple image with filtered complex}
\label{fig:toyim}
\end{subfigure}
\begin{subfigure}{\textwidth}
\center
\includegraphics[width=4.0in, trim=0in .5in 0in 0in, clip]{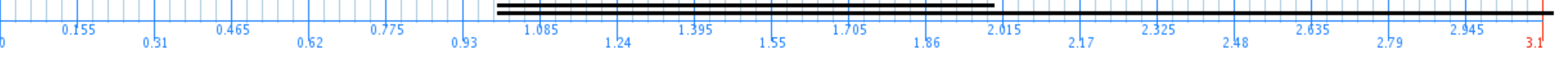}
\caption{$\beta_0$ barcode for above image}
\end{subfigure}
\begin{subfigure}{\textwidth}
\center
\includegraphics[width=4.0in, trim=0in .5in 0in 0in, clip]{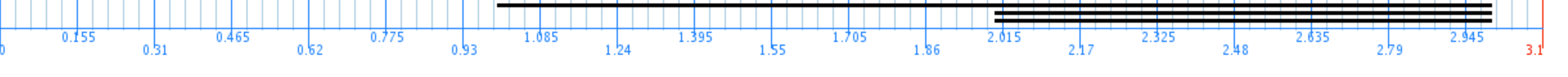}
\caption{$\beta_1$ barcode for above image}
\end{subfigure}
\caption{Constructing an increasing 1D-filtration on an image \cite{metric lesion}}
\label{fig:filt}
\end{figure}

\subsubsection{Feature Selection}
We use a slightly different set of four features as compared to the digits example.  These features are shown below.  The two sets of features that focus on long bars and features which take into account shorter bars is used here.  In this application, this is analogous to filtering the barcode to remove the large number of smaller bars.  Because of the variations in lesion size, we look at the average over each bar to try and eliminate the effects of large variations in lesion size.

\begin{align*}
\sum_{i}^n &x_{i}(y_{i}-x_{i})/n \\
\sum_{i}^n &(y_{max}-y_{i})(y_{i}-x_{i})/n \\
\sum_{i}^n &x_{i}^{2}(y_{i}-x_{i})^4/n \\
\sum_{i}^n &(y_{max}-y_{i})^2(y_{i}-x_{i})^4/n \\
\end{align*}

As mentioned above, we have 56 barcodes per lesion.  With four features, this yields a feature vector of 224 features for each lesion.


\subsubsection{Classification Results}
We apply the SVM using only the Gaussian kernel and use an exponential parameter sweep to find optimal values of $\gamma$ for each method.  We use LOOCV to calculate the classification accuracies.  The results are shown below.  Table \ref{tbl:SVM} gives the results for 1D and 2D filtrations for several different datasets while Table \ref{tbl:lesion_miss} shows how well the algorithm performs on different lesion types for the different filtrations.  Table \ref{tbl:size_class} demonstrates the effect of size on classification.

\begin{table}[h]
\caption{SVM Classification Accuracies for 1D and 2D Filtrations}
\label{tbl:SVM} 
\begin{center}
\begin{tabular}[h]{| l | c | c | c | c |}
\hline
Filtration & Full  & HcHeCM & HeCM & CM \\ \hline
1D (Intensity) & 53.03\% & 59.66\% & 65.74\% & 75.56\% \\ \hline
2D & 67.42\% & 74.79 \% & 81.48\% & 86.67\% \\ \hline
\end{tabular}
\end{center}
\end{table}

\begin{table}[h]
\caption{HeCM \% Classification Accuracy by Lesion Type}
\label{tbl:lesion_miss} 
\begin{center}
\begin{tabular}[h]{| c | c | c | c | c |}
\hline
Filtration & \% of HeCM & \% of Heman.  & \% of Cysts & \% of Metas. \\ \hline
1D & 65.74\% & 33.33\% & 75.56\% & 68.89\% \\ \hline
2D & 81.48\% & 61.11\% & 86.67\% & 84.44\% \\ \hline
\end{tabular}
\end{center}
\end{table}

\begin{table}[h]
\caption{Classification by Lesion Size of HeCM}
\label{tbl:size_class} 
\begin{center}
\begin{tabular}[h]{| c | c | c | c | c |}
\hline
Lesion Size by Area & \% Accu. & \# of Heman. & \# of Cysts & \# of Metas. \\ \hline
All & 81.48\% & 18 & 45 & 45 \\ \hline
$<$10000 px & 82.52\% & 18 & 42 & 43 \\ \hline
$<$5000 px  & 84.78\% & 16 & 39 & 37 \\ \hline
$<$2500 px  & 86.25\% & 14 & 32 & 34 \\ \hline
$<$1250 px  & 88.514\% & 8 & 28 & 23 \\ \hline
\end{tabular}
\end{center}
\end{table}

Using \cite{metric lesion}, we see that that topological features are comparable with using the matching metric to generate features.  The results from the HeCM dataset for the two methods are shown below.  They reflect the correct classification of a single lesion using a the topological features, making the two methods virtually the same for this subset of the data.  Comparing with the other results in \cite{metric lesion} shows that the two results are very close in most categories, with each slightly outperforming the other in certain subsets of the data.

\begin{table}[!h]
\caption{Classification Methods}
\label{tbl:metric} 
\begin{center}
\begin{tabular}[h]{| c | c | c |}
\hline
Filtration & Barcode Features & Matching Metric \\ \hline
1D & 65.74\% & 63.80\% \\ \hline
2D & 81.48\% & 80.56\% \\ \hline
\end{tabular}
\end{center}
\end{table}

\subsection{Discussion}
These two examples demonstrate the classifying power of topological features when applied to real world datasets.  This was done using off-the-shelf machine learning algorithms showing that these features can easily be combined with more traditional classification methods adding a set of additional classification features to the machine learning toolbox.

These examples also show the power of combining topology with geometry.  In both datasets, this is an integral part of the classification procedure.  The results in the hepatic lesion dataset provide an especially good example of the potential gains that can be achieved by combining both fields.

In summary, using algebraic geometry and invariant theory, we have identified a family of coordinates on the space of finite metric spaces, or sampled shapes.  These coordinates can serve as a method for organizing the collection of all barcodes, and therefore any database whose members produce barcodes.   Of course, we can also use various metrics on barcode space, such as the bottleneck or Wasserstein distances.  It would be extremely interesting to   analyze the relationship between these distances on barcode spaces with various more algebraic notions of distance on the barcode coordinates.  It would also be very interesting to define and analyze analogous coordinates on spaces of multidimensional persistence modules, where they might give information which is currently not accessible due to the complexity of the algebraic descriptions of multidimensional persistence modules.

\end{document}